\def\NZQ{\mathbb}               
\def\RR{{\NZQ R}}
\def\frk{\mathfrak}               
\def\Phi{{\frk N}}
\def\opn#1#2{\def#1{\operatorname{#2}}} 
\opn\gr{gr}
\def\Oc{{\mathcal O}}
\def\Cc{{\mathcal C}}
\newtheorem{Theorem}{Theorem}[section]
\newtheorem{Lemma}[Theorem]{Lemma}
\newtheorem{Corollary}[Theorem]{Corollary}
\theoremstyle{definition}
\newtheorem{Remark}[Theorem]{Remark}
\newtheorem{Definition}[Theorem]{Definition}
\newtheorem{Conjecture}[Theorem]{Conjecture}
\let\epsilon\mu
\let\phi=\varphi
\let\kappa=\varkappa
\opn\dis{dis}
\opn\height{height}
\opn\dist{dist}
\def\pnt{{\raise0.5mm\hbox{\large\bf.}}}
\opn\Lex{Lex}
\opn\conv{conv}
\begin{document}

\title{Triangular faces of the order and chain polytope of\\
a maximal ranked poset}

\author{Aki Mori}

\address{Aki Mori,
	Learning center, Institute for general education, Setsunan University, Neyagawa, Osaka, 572-8508, Japan} 
\email{aki.mori@setsunan.ac.jp}

\subjclass[2020]{52B05, 06A07}
\keywords{order polytope, chain polytope, partially ordered set, $f$-vector }

\begin{abstract}
Let $\Oc(P)$ and $\Cc(P)$ denote the order polytope and chain polytope, respectively, associated with a finite poset $P$. We prove the following result: if $P$ is a maximal ranked poset, then the number of triangular $2$-faces of $\Oc(P)$ is less than or equal to that of $\Cc(P)$, with equality holding if and only if $P$ does not contain an $X$-poset as a subposet.
\end{abstract}

\maketitle
\section{Introduction}
The order polytope $\Oc(P)$ and chain polytope $\Cc(P)$ associated with a finite partially ordered set $P$, were introduced by Stanley \cite{Sta}.
These are significant classes of $n$-dimensional polytopes that have been widely studied in the fields of combinatorics and commutative algebra.
We are interested in the face structure and the number of faces of $\Oc(P)$ and $\Cc(P)$.
For an $n$-dimensional polytope, $(f_0, f_1, \ldots, f_{n-1})$ is called the {\it f-vector} which denotes the number of $i$-dimensional faces, where $0 \leq i \leq n-1$.
Hibi and Li proposed the following conjecture regarding the $f$-vectors of $\Oc(P)$ and $\Cc(P)$:

\begin{Conjecture}[{\cite[Conjecture 2.4]{HL2}}]\label{HL-conj}
Let $P$ be a finite poset with $|P| = d > 1$. 
Then\\
(a)  $f_i(\Oc(P)) \leq f_i(\Cc(P))$ for all $1 \leq i \leq d-1$.\\
(b)  If $f_i(\Oc(P)) = f_i(\Cc(P))$ for some $1 \leq i \leq d-1$, then $\Oc(P)$ and $\Cc(P)$ are unimodularly equivalent.    
\end{Conjecture}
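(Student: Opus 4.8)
The plan is to deduce both parts of Conjecture~\ref{HL-conj} from a single dimension-preserving injection from the faces of $\Oc(P)$ into the faces of $\Cc(P)$, and then to pin down exactly when this injection fails to be onto. I would begin by fixing combinatorial models for the faces. For the order polytope I would use Stanley's description: writing $\widehat{P} = P \cup \{\hat 0, \hat 1\}$ for the poset obtained by adjoining a global minimum and maximum, every nonempty face of $\Oc(P)$ is affinely isomorphic to $\Oc(R)$, where $R$ is the proper part (the complement of the two extreme blocks) of a connected quotient of $\widehat{P}$ obtained by contracting cover relations, the blocks forming connected subposets and the quotient remaining a poset in which $\hat 0$ and $\hat 1$ stay distinct; since $\dim \Oc(R) = |R|$, the number $f_i(\Oc(P))$ counts those quotients whose proper part has $i$ elements. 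For the chain polytope I would exploit that $\Cc(P)$ is the stable-set polytope of the comparability graph of $P$, which is perfect, so its facets are exactly the coordinate hyperplanes $x_p = 0$ and the maximal-chain inequalities $\sum_{p \in C} x_p = 1$. Deletion gives $\Cc(P) \cap \{x_p = 0\} = \Cc(P \setminus p)$, and I would develop the corresponding description of the faces carved out by the chain inequalities (a ``contraction'' operation), yielding a recursive deletion--contraction model; producing this clean model is the first technical step.

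For part (a) I would construct the injection $\iota_i\colon \{\text{$i$-faces of }\Oc(P)\} \hookrightarrow \{\text{$i$-faces of }\Cc(P)\}$ combinatorially, modeled on the classical vertex bijection. The vertices of $\Oc(P)$ are the indicator vectors $\mathbf 1_F$ of filters $F$, those of $\Cc(P)$ are the indicator vectors $\mathbf 1_A$ of antichains $A$, and Stanley's transfer map $\tau\colon \Oc(P) \to \Cc(P)$, defined by $\tau(x)_p = x_p - \max\{x_q : q \lessdot p\}$, induces the bijection $F \mapsto \min F$ from filters to antichains; in particular $f_0(\Oc(P)) = f_0(\Cc(P))$, which is why the conjecture begins at $i = 1$. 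Since a face is determined by its vertex set, I would send an $i$-face $G$ of $\Oc(P)$ to the collection $\{\min F : \mathbf 1_F \text{ a vertex of } G\}$ and prove that it is the vertex set of an $i$-face of $\Cc(P)$ and that the assignment is injective. The technical heart is the dimension bookkeeping: I must show the image antichains span an $i$-dimensional face, which, since $\tau$ is only piecewise linear, is not automatic. I would establish it by an induction on $|P|$ that peels off a minimal element $p$, matching the deletion facets $\Oc(P) \cap \{x_p = 0\} \cong \Oc(P \setminus p)$ and $\Cc(P)\cap\{x_p=0\} = \Cc(P \setminus p)$ and reducing the contraction facets to smaller posets through the recursive models.

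For part (b) I would first invoke the theorem of Hibi and Li that $\Oc(P)$ and $\Cc(P)$ are unimodularly equivalent if and only if $P$ contains no X-poset (the five-element poset $a_1, a_2 < b < c_1, c_2$) as a subposet. Granting (a), part (b) becomes its contrapositive: if $P$ contains an X-poset, then $f_i(\Oc(P)) < f_i(\Cc(P))$ strictly for every $1 \le i \le d-1$. The X-poset is precisely the local gadget that destroys surjectivity of $\iota_i$. Already at $i = 2$ this is visible: the four maximal chains through the central element $b$ endow $\Cc(P)$ with a triangular $2$-face having no preimage among the connected quotients of $\widehat{P}$, and the count of such triangular $2$-faces is exactly what the abstract asserts for maximal ranked posets. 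To obtain strictness at every $i$, I would suspend this excess triangle, building an explicit $i$-face of $\Cc(P)$ outside the image of $\iota_i$ by combining the triangle with an $(i-2)$-face supported on the complement of the X-poset, and verifying via the recursive model that the combination is genuinely absent on the order side.

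I expect the main obstacle to be this last propagation step. The triangular ($i = 2$) certificate is essentially local and is what the present paper establishes under the maximal-ranked hypothesis; converting it into strict inequalities across the full range $1 \le i \le d-1$ and for arbitrary finite $P$ requires understanding how a single excess face in $\Cc(P)$ forces excess faces in every dimension, uniformly in $P$. This is precisely where the maximal-ranked assumption and the restriction to $2$-faces must be removed, and where a global argument --- likely a dimension count leveraging the perfection of the comparability graph, or a deletion--contraction recursion compatible with $\iota_i$ --- has to replace the local triangle bookkeeping. Proving that $\iota_i$ is dimension-preserving and injective for all $i$ simultaneously, despite $\tau$ being only piecewise linear, is the crux on which the whole scheme rests.
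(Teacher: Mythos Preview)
The statement you are attempting to prove is Conjecture~\ref{HL-conj}, which the paper records as an \emph{open conjecture} of Hibi and Li; the paper does not prove it and offers no proof to compare against. What the paper actually establishes is only the special case of triangular $2$-faces for maximal ranked posets (Theorem~\ref{main}).

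More seriously, your plan for part~(b) is internally inconsistent with a known fact that the paper explicitly cites. You reduce (b) to its contrapositive --- ``if $P$ contains an $X$-poset then $f_i(\Oc(P)) < f_i(\Cc(P))$ strictly for every $1 \le i \le d-1$'' --- but this is false at $i=1$: by \cite{HLSS} one has $f_1(\Oc(P)) = f_1(\Cc(P))$ for \emph{every} finite poset $P$, with or without an $X$-subposet. The paper itself flags this, noting that Conjecture~\ref{HL-conj}(b) must be read with $2 \le i \le d-1$. So your propagation scheme, which asks for a strict excess at every $i \ge 1$, cannot succeed as stated; any injection $\iota_1$ you build will in fact be a bijection regardless of whether $P$ contains an $X$-poset.

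There is also a structural issue with the proposed injection. Sending an $i$-face $G$ of $\Oc(P)$ to the set $\{\min F : \mathbf 1_F \in G\}$ does not in general produce the vertex set of a face of $\Cc(P)$: the transfer map $\tau$ is piecewise linear and does not carry faces to faces (this is precisely why $\Oc(P)$ and $\Cc(P)$ can fail to be combinatorially equivalent). You acknowledge this as ``the crux,'' but the paper's evidence --- Lemma~\ref{OCedge} and the need for a hand-built, case-by-case map $\phi$ in Lemma~\ref{triangle inequality} even in the maximal ranked setting --- suggests that a uniform dimension-preserving injection of the kind you sketch is not available by the transfer map alone and would require a genuinely new idea.
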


In the same paper \cite{HL2}, it is proven that $\Oc(P)$ and $\Cc(P)$ are unimodularly equivalent if and only if $P$ does not contain an $X$-poset (Figure 1) as a subposet.
As supporting evidence for this conjecture, we present the results that have already been shown.
It is shown that the number of vertices of $\Oc(P)$ is equal to that of $\Cc(P)$ in \cite{Sta}, and similarly, the number of edges of $\Oc(P)$ is equal to that of $\Cc(P)$ in \cite{HLSS}.
In this study, it should be noted that Conjecture \ref{HL-conj} (b) is interpreted as $2 \leq i \leq d-1$ based on the results in \cite{HLSS}.
In \cite{HL2}, it is shown that the number of facets of $\Oc(P)$ is less than or equal to that of $\Cc(P)$, with equality holding if and only if $P$ does not contain an $X$-poset as a subposet.
The recent result described below proves Conjecture \ref{HL-conj} (a) when applied to specific classes of posets.
In \cite{AFJ}, it is shown that if $P$ is a poset called a maximal ranked poset, then $f_{i}(\Oc(P)) \leq f_i(\Cc(P))$ holds for all $1 \leq i \leq d-1$.
Furthermore, in \cite{FL}, it is shown that if $P$ is a poset built inductively by taking disjoint unions and ordinal sums of posets, then $f_{i}(\Oc(P)) \leq f_i(\Cc(P))$ holds for all $1 \leq i \leq d-1$.
The result by \cite{FL} encompasses the result by \cite{AFJ}.
Building upon the previous studies mentioned above, this paper focuses on the triangular $2$-faces of $\Oc(P)$ and $\Cc(P)$.
The main theorem of this paper states that if $P$ is a maximal ranked poset, then the number of triangular $2$-faces of $\Oc(P)$ is less than or equal to that of $\Cc(P)$, with equality holding if and only if $P$ does not contain an $X$-poset as a subposet.
In other words, when $P$ is a maximal ranked poset containing an $X$-poset, the number of triangles in the $1$-skeleton of $\Cc(P)$ is strictly greater than that in the $1$-skeleton of $\Oc(P)$.
This result contributes to the advancement of the Hibi and Li's Conjecture.

\begin{figure}
\centering
\includegraphics[scale=0.5]{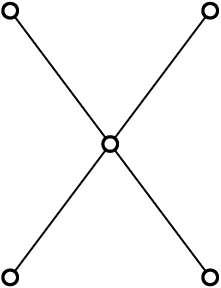}
\caption{$X$-poset}
\end{figure}

\section{Basics on the faces of order and chain polytope}
Let $P=\{p_1,\ldots,p_d\}$ be a finite poset equipped with a partial order $\leq$.
To each subset $W \subset P$, we associate $\rho(W) = \sum_{p_i \in W} {\bf e}_i \in \RR^d$, where ${\bf e}_1, \ldots, {\bf e}_d$ are the canonical unit coordinate vectors of $\RR^d$.
In particular $\rho(\emptyset)$ is the origin of $\RR^d$.
A {\em poset ideal} of $P$ is a subset $I \subset P$ such that if $p_i \in I$ and $p_j \leq p_i$, then $p_j \in I$.
An {\em antichain} of $P$ is a subset $A \subset P$ such that $p_i$ and $p_j$ belonging to $A$ with $i \neq j$ are incomparable.
Note that the empty set $\emptyset$ is considered both a poset ideal and an antichain of $P$.
We say that $p_{j}$ {\em covers} $p_{i}$ if $p_{i} < p_{j}$ and $p_{i}<p_{k}<p_{j}$ for no $p_{k} \in P$. 

The two polytopes associated with a poset, namely the {\em order polytope}
\[
\mathcal{O}(P)
=
\{
(x_{1}, \dots, x_{d}) \in \RR^{d}
:
0 \leq x_{i} \leq 1 
\;
\mbox{for all}
\;
1 \leq i \leq d,
\;
x_{i} \geq x_{j}
\;
\mbox{if}
\;
p_{i} \leq p_{j}
\;
\mbox{in}\;P
\}
\]
and the {\em chain polytope}
\[
\mathcal{C}(P)
=
\{
(x_{1}, \dots, x_{d}) \in \RR^{d}
:
x_{i} \geq 0
\;
\mbox{for all}
\;
1 \leq i \leq d,
\;\;
x_{i_1}+\cdots +x_{i_k} \leq 1
\;
\mbox{if}
\;
p_{i_1} < \cdots < p_{i_k}
\;
\mbox{in} 
\;P
\}
\]
were introduced by \cite{Sta}. 

We briefly review the properties of these two polytopes as established by \cite{Sta}.
One has $\mathrm{dim}{\mathcal O}(P)$ $=$ $\mathrm{dim}{\mathcal C}(P)=d$.
Each vertex of ${\mathcal O}(P)$ corresponds to $\rho(I)$, where $I$ is a poset ideal of $P$ and each vertex of ${\mathcal C}(P)$ corresponds to $\rho(A)$, where $A$ is an antichain of $P$.
Notice that the poset ideals are in a one-to-one correspondence with the antichains.
It then follows that the number of vertices of ${\mathcal O}(P)$ is equal to that of ${\mathcal C}(P)$. 
Then the facets of ${\mathcal O}(P)$ are obtained by the hyperplane whose defining equation is as follows:

\begin{itemize}
\item 
$x_{i}=0$, where $p_{i} \in P$ is maximal;
\item
$x_{j}=1$, where $p_{j} \in P$ is minimal;
\item
$x_{i}=x_{j}$, where $p_{j}$ covers $p_{i}$,
\end{itemize}
\noindent
the facets of ${\mathcal C}(P)$ are the following:

\begin{itemize}
\item 
$x_{i}=0$, for all $p_{i}\in P$;
\item
$x_{i_{1}}+\cdots+x_{i_{k}}=1$, where $p_{i_{1}}<\cdots<p_{i_{k}}$
 is a maximal chain of $P$.
\end{itemize}

Recall that the {\it comparability graph} ${\rm Com}(P)$ of $P$ is the finite simple graph on the vertex set $\{p_1,\ldots, p_d\}$, where the edges are defined by $\{p_i, p_j\}$ with $i \neq j$, such that $p_i$ and $p_j$ are comparable in $P$.
In general, we say that a nonempty subset $Q = \{p_{i_1},\ldots,p_{i_k}\}$ of $P$ is {\it connected} in $P$ if the induced subgraph of ${\rm Com}(P)$ on $\{p_{i_1},\ldots,p_{i_k}\}$ is connected.

The descriptions of edges and triangular $2$-faces of ${\mathcal O}(P)$, as well as those of ${\mathcal C}(P)$, respectively, are obtained as follows.
Note that $A \triangle B$ denotes the symmetric difference of $A$ and $B$, that is $A \triangle B = (A \setminus B)\cup(B \setminus A)$.   

\begin{Lemma}[{\cite[Lemma 4, Lemma 5]{HL1}}]\label{edge}
Let $P$ be a finite poset.
\begin{enumerate}[(a)]
\item
Let $I$ and $J$ be poset ideals of $P$ with $I \neq J$. 
Then the ${\rm conv}(\{\rho(I),\rho(J)\})$ is an edge of ${\mathcal O}(P)$ if and only if $I \subset J$ and $J \setminus I$ is connected in $P$.
\item
 Let $A$ and $B$ be antichains of $P$ with $A \neq B$.
Then the ${\rm conv}(\{\rho(A),\rho(B)\})$ is an edge of ${\mathcal C}(P)$ if and only if $A \triangle B$ is connected in $P$.
\end{enumerate}
\end{Lemma}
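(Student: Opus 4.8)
The plan is to prove both characterizations by means of the elementary criterion that two distinct vertices $v,w$ of a polytope span an edge if and only if their midpoint $\tfrac{1}{2}(v+w)$ cannot be written as a convex combination of vertices other than $v$ and $w$. For the ``only if'' directions I would apply this criterion directly, exploiting the identity $\rho(U)+\rho(V)=\rho(U\cap V)+\rho(U\cup V)$ valid for all $U,V\subseteq P$; for the ``if'' directions I would instead exhibit a linear functional on $\RR^{d}$ whose maximum over the polytope is attained exactly along the relevant segment.

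For the ``only if'' direction of (a): if $I$ and $J$ are incomparable, then $I\cap J$ and $I\cup J$ are poset ideals different from both $I$ and $J$, and $\rho(I\cap J)+\rho(I\cup J)=\rho(I)+\rho(J)$, so the midpoint of $\conv(\{\rho(I),\rho(J)\})$ is a convex combination of the distinct vertices $\rho(I\cap J),\rho(I\cup J)$ and hence $\conv(\{\rho(I),\rho(J)\})$ is not an edge; thus we may assume $I\subsetneq J$. If $J\setminus I$ is disconnected, write it as a disjoint union $C_{1}\sqcup C_{2}$ of nonempty sets with no comparability between them; then $I\cup C_{1}$ and $I\cup C_{2}$ are again poset ideals (an element below a point of $C_{1}$ lies in $J$, hence in $I$ or in $J\setminus I$, and it cannot lie in $C_{2}$), they differ from $I$ and from $J$, and $\rho(I\cup C_{1})+\rho(I\cup C_{2})=\rho(I)+\rho(J)$, so again $\conv(\{\rho(I),\rho(J)\})$ is not an edge. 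For the ``only if'' direction of (b), assume $A\triangle B=C_{1}\sqcup C_{2}$ with $C_{1},C_{2}$ nonempty and mutually incomparable, and set $A'=(A\cap B)\cup((A\setminus B)\cap C_{1})\cup((B\setminus A)\cap C_{2})$ and $B'=(A\cap B)\cup((B\setminus A)\cap C_{1})\cup((A\setminus B)\cap C_{2})$, i.e.\ exchange the $A$- and $B$-parts of the symmetric difference lying inside $C_{2}$. Then $A'$ and $B'$ are antichains (the only cross-pairs that could fail to be incomparable have one member in $C_{1}$ and one in $C_{2}$), they differ from $A$ and from $B$ since $C_{1},C_{2}\neq\emptyset$, and $\rho(A')+\rho(B')=\rho(A)+\rho(B)$, so $\conv(\{\rho(A),\rho(B)\})$ is not an edge.

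For the ``if'' direction of (a), assume $I\subsetneq J$ and $C:=J\setminus I$ is connected. The functional $\ell_{0}(x)=\sum_{p_{i}\in I}x_{i}-\sum_{p_{i}\notin J}x_{i}$ attains its maximum $|I|$ over $\Oc(P)$ exactly on the face $\{x\in\Oc(P):x_{i}=1\text{ for }p_{i}\in I,\ x_{i}=0\text{ for }p_{i}\notin J\}$, which is a copy of the order polytope of the subposet induced on $C$, in the coordinates indexed by $C$. To cut this face down to the segment $\{\rho(I)+t\,\rho(C):t\in[0,1]\}=\conv(\{\rho(I),\rho(J)\})$ I would subtract from $\ell_{0}$ a nonnegative sum of covering slacks $x_{k}-x_{l}$, over covering relations $p_{k}\lessdot p_{l}$ with $p_{k},p_{l}\in C$, chosen so that the associated comparability edges contain a spanning tree of the induced subgraph of $\mathrm{Com}(P)$ on $C$. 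Here connectedness of $C$ is used, together with the fact that $C$, being a difference of two ideals, is convex, so that every comparability in $C$ is realized by a chain of covers inside $C$; forcing the chosen covering slacks to vanish then forces all coordinates on $C$ to coincide. The modified functional is therefore maximized over $\Oc(P)$ exactly along $\conv(\{\rho(I),\rho(J)\})$, which is an edge. For (b), assume $A\triangle B$ is connected; since $A\setminus B$ and $B\setminus A$ are antichains, every comparability inside $A\triangle B$ joins these two sets, and a maximal chain $D$ through such a comparable pair $p,q$ meets $A\cup B$ only in $\{p,q\}$, so $\sum_{p_{i}\in D}x_{i}=x_{p}+x_{q}$ on the face where all coordinates outside $A\cup B$ vanish. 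Beginning from $\sum_{p_{i}\in A\cap B}x_{i}-\sum_{p_{i}\notin A\cup B}x_{i}$ and subtracting nonnegative chain slacks $1-\sum_{p_{i}\in D}x_{i}$ over maximal chains $D$ realizing a spanning tree of the comparability graph on $A\triangle B$, I obtain a functional whose maximum $|A\cap B|$ over $\Cc(P)$ is attained exactly along $\conv(\{\rho(A),\rho(B)\})$.

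The routine verifications I would skip are the degenerate cases (for instance $A\subseteq B$, where $A\triangle B$ is necessarily a single element) and the bookkeeping that the auxiliary vertices $I\cup C_{1},I\cup C_{2}$, resp.\ $A',B'$, are genuinely distinct from the original two — which uses only that $C_{1},C_{2}\neq\emptyset$. The one step demanding real care, and the main obstacle, is checking in each ``if'' direction that after subtracting the covering slacks (resp.\ chain slacks) the set of maximizers has not grown beyond the intended segment; this is exactly what the choice of a spanning tree in the connected set $J\setminus I$ (resp.\ $A\triangle B$) secures.
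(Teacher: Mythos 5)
The paper offers no proof of this lemma: it is imported verbatim from \cite{HL1}, so there is no internal argument to compare against. Your proof is correct and follows essentially the standard route (and, as far as the cited source goes, the same one): the midpoint decompositions $\rho(I)+\rho(J)=\rho(I\cap J)+\rho(I\cup J)$, resp.\ the $C_1$/$C_2$ swap of antichain parts, for necessity, and an explicit supporting functional assembled from covering slacks (resp.\ maximal-chain slacks) along a spanning tree of the comparability graph on $J\setminus I$ (resp.\ $A\triangle B$) for sufficiency, where the order-convexity of $J\setminus I$ and the bipartiteness of the comparability graph on $A\triangle B$ do exactly the work you assign them. One cosmetic caveat: the midpoint criterion you state at the outset is not actually an ``if and only if'' (the midpoint of a non-edge diagonal need not lie in the convex hull of the \emph{remaining} vertices, e.g.\ for a non-edge diagonal of an asymmetric quadrilateral face), but you only ever invoke the valid implication --- a representation of the midpoint charging some third vertex rules out an edge --- so nothing in the argument breaks.
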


\begin{Lemma}[{\cite[Theorem 1, Theorem 2]{M}}]\label{2-face}
Let $P$ be a finite poset.
\begin{enumerate}[(a)]
\item
Let $I$, $J$, and $K$ be pairwise distinct poset ideals of $P$. 
Then the ${\rm conv}(\{\rho(I), \rho(J), \rho(K)\})$ is a 2-face of $\mathcal{O}(P)$ if and only if $I \subset J \subset K$ and $J \setminus I$, $K \setminus J$, $K \setminus I$ are connected in $P$.
\item
 Let $A$, $B$, and $C$ be pairwise distinct antichains of $P$. 
Then the ${\rm conv}(\{\rho(A), \rho(B), \rho(C)\})$ is a 2-face of $\mathcal{C}(P)$ if and only if $A \triangle B$, $B \triangle C$ and $C \triangle A$ are connected in $P$.
\end{enumerate}
\end{Lemma}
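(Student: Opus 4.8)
The plan is to combine the explicit facet lists of $\Oc(P)$ and $\Cc(P)$ recalled above with the standard criterion for a face: for a polytope $Q$, the convex hull of a set $S$ of vertices is a face precisely when $S$ is exactly the set of all vertices of $Q$ lying on every facet of $Q$ that contains $S$. Moreover three distinct vertices that are $0/1$-points are automatically affinely independent, because a strict convex combination of two distinct $0/1$-points is never again a $0/1$-point (on a coordinate where they differ the value lands strictly between $0$ and $1$); so the convex hulls in question are genuine triangles. With this, the ``only if'' directions of (a) and (b) are immediate: a triangular $2$-face has its three edges among the $1$-faces of the polytope, so by Lemma~\ref{edge} the three differences in question are connected, and in (a) the three ideals are pairwise inclusion-comparable, hence form a chain. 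What remains is the ``if'' directions.

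For (a), assume $I\subset J\subset K$ with $J\setminus I$, $K\setminus J$, $K\setminus I$ connected, and let $L$ be any poset ideal with $\rho(L)$ on every facet of $\Oc(P)$ through $\{\rho(I),\rho(J),\rho(K)\}$; I would show $L\in\{I,J,K\}$. The facets through all three points are: $x_i=0$ for maximal $p_i\notin K$; $x_j=1$ for minimal $p_j\in I$; and $x_i=x_j$ for cover pairs $p_i\lessdot p_j$ whose two elements lie in a common one of the four ``levels'' $I$, $J\setminus I$, $K\setminus J$, $P\setminus K$. The first two families force $I\subseteq L\subseteq K$; the third forces the indicator of $L$ to be constant along saturated chains inside a level, hence constant on each connected level — for $J\setminus I$ and $K\setminus J$ this is the hypothesis, and for $I$, $P\setminus K$ it is automatic since every connected component contains a minimal, resp.\ maximal, element of $P$, which the previous facets pin down. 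Thus $L=I\cup X\cup Y$ with $X\in\{\emptyset,J\setminus I\}$, $Y\in\{\emptyset,K\setminus J\}$, leaving only the candidates $I$, $J$, $K$, and $I\cup(K\setminus J)$. The last is not a poset ideal: connectivity of $K\setminus I=(J\setminus I)\sqcup(K\setminus J)$ yields a comparable pair $p\in J\setminus I$, $q\in K\setminus J$, necessarily $p<q$ (else $q\in J$), so $q\in I\cup(K\setminus J)$ but $p\notin I\cup(K\setminus J)$. Hence $L\in\{I,J,K\}$ and the triangle is a $2$-face.

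For (b), assume $A\triangle B$, $B\triangle C$, $C\triangle A$ are connected. Translating the facet list of $\Cc(P)$, an antichain $D$ gives a vertex $\rho(D)$ on every facet through $\{\rho(A),\rho(B),\rho(C)\}$ exactly when $D\subseteq A\cup B\cup C$ and $D$ meets every maximal chain of $P$ that meets all of $A$, $B$, $C$; one must show any such $D$ is one of $A,B,C$. I would argue in the same spirit: first, using that $D$ is an antichain and must cut all the relevant maximal chains, localize each element of $D$ into one of the Boolean regions cut out by $A,B,C$; then use connectivity of the three symmetric differences to propagate these memberships along comparabilities (extending a comparable pair that straddles two regions to a full maximal chain with controlled intersections) and conclude that $D$ coincides with one of $A,B,C$ on each region, hence everywhere.

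I expect the genuinely delicate point to be this last case analysis for (b): whereas the order-polytope facets are local cover relations, the chain-polytope facets are the global maximal-chain inequalities, so pinning down $D$ means simultaneously controlling, for many maximal chains at once, which of $A,B,C$ (and $D$) each one meets — and it is precisely the connectivity of all three symmetric differences, rather than just one or two, that makes the propagation argument close up. Part (a), by contrast, is essentially routine once the facet list is on the table.
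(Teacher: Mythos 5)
First, a point of comparison: the paper gives no proof of this lemma at all --- it is imported from \cite{M} (with the accompanying Remark correcting the hypothesis of part (a)), so there is no in-paper argument to measure yours against; you are effectively reproving a cited result. On its own terms, your framework is sound: the face criterion via intersections of facets, the observation that three distinct $0/1$-points are automatically affinely independent, and the ``only if'' directions read off from Lemma \ref{edge} are all correct. Your ``if'' direction for (a) is essentially complete and, notably, proves the \emph{corrected} statement (all three of $J\setminus I$, $K\setminus J$, $K\setminus I$ connected). The one step you leave implicit is that the four levels $I$, $J\setminus I$, $K\setminus J$, $P\setminus K$ are order-convex, which is what licenses the passage from ``the indicator of $L$ is constant on cover pairs inside a level'' to ``constant on each connected component of the level in ${\rm Com}(P)$''; this is a one-line check (if $p<r<q$ with $p,q$ in a common level, then $r$ lies in that level because the levels are differences of nested ideals), but it should be said.

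The genuine gap is the ``if'' direction of (b). Your translation of the facet conditions is correct ($D\subseteq A\cup B\cup C$, and $D$ meets every maximal chain that meets all of $A$, $B$, $C$), but everything after that is a plan rather than a proof: ``localize each element of $D$ into one of the Boolean regions'' and ``propagate these memberships along comparabilities'' are precisely the steps that carry the combinatorial content of the theorem, and you do not carry them out. You never exhibit the mechanism by which connectivity of all three symmetric differences forces $D$ to agree with one of $A,B,C$ on every region simultaneously, and you do not address the degenerate configurations where the chain conditions become vacuous --- for instance, if $A=\emptyset$ then no maximal chain meets all three antichains and the only constraint on $D$ is $D\subseteq B\cup C$; closing that case requires the further observation that a connected antichain is a singleton (so $|B|=|C|=1$ and $B\cup C$ is a two-element chain), a kind of detail entirely absent from the sketch. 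You correctly flag this as the delicate point, but flagging it is not the same as resolving it; as written, part (b) remains unproven.
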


\begin{Remark}
The statement of \cite[Theorem 1]{M} is ``$I \subset J \subset K$ and $K \setminus I$ is connected in $P$", which is incorrect. 
Lemma \ref{2-face} implies that the triangles in the $1$-skeleton of $\mathcal{O}(P)$ or $\mathcal{C}(P)$ correspond to the triangular $2$-faces of each polytope.
\end{Remark}

\section{The number of triangular $2$-faces}

The {\em rank} of a poset $P$ is the maximal length of a chain in $P$, where the length of a chain $C$ is $|C|-1$.
A poset $P$ is {\em graded of rank n} if every maximal chain in $P$ has the same length $n$.
Let $P_i$ denote the set of elements of rank $i$.
Namely, if $P$ is graded of rank $n$, then it can be written $P = \cup_{i=0}^n P_i$ such that every maximal chain has the form $p_0 < p_1 < \cdots < p_n$, where $p_i \in P_i$.
If any two elements in distinct rank of a graded poset $P$ are comparable, then $P$ is called a {\em maximal ranked poset} \cite{AFJ}.
Given a poset ideal $I \subset P$, we denote ${\rm max}(I)$ for the set of maximal elements of $I$. Similarly, given an antichain $A \subset P$, we denote $\langle A \rangle$ as the poset ideal of $P$ generated by $A$, that is $\langle A \rangle = \{x : x \leq p\; {\rm for\;some}\; p \in A\}$. This establishes a bijection between poset ideals $I$ and antichains $A$ of $P$.

Let $E^{*}_{\mathcal{O}(P)}$ denote the set of pairs $\{I, J\}$, where $I$ and $J$ are poset ideals of $P$ with $I \neq J$, and ${\rm conv}(\{\rho(I),\rho(J)\})$ forms an edge of $\mathcal{O}(P)$, while ${\rm conv}(\{\rho({\rm max}(I)),\rho({\rm max}(J))\})$ does not form an edge of $\mathcal{C}(P)$.
Let $E^{*}_{\mathcal{C}(P)}$ denote the set of pairs $\{A, B\}$, where $A$ and $B$ are antichains of $P$ with $A \neq B$, and ${\rm conv}(\{\rho(A),\rho(B)\})$ forms an edge of $\mathcal{C}(P)$, while ${\rm conv}(\{\rho(\langle A \rangle),\rho(\langle B \rangle)\})$ does not form an edge of $\mathcal{O}(P)$.

\begin{Lemma}\label{OCedge}
Let $P$ be a maximal ranked poset of rank $n$.
\begin{enumerate}[(a)]
\item
Let $I$ and $J$ be poset ideals of $P$ with $I \neq J$. 
Then $\{I, J\} \in E^{*}_{\mathcal{O}(P)}$ if and only if $I = \emptyset$, $J$ is connected in $P$, and $|{\rm max}(J)| \geq 2$.  
\item
Let $A$ and $B$ be antichains of $P$ with $A \neq B$.
Then $\{A, B\} \in E^{*}_{\mathcal{C}(P)}$ if and only if there exists 
 some $\ell$ such that $A = P_{\ell -1}$, $B \subset P_{\ell}$, and $|B| \geq 2$, where $1 \leq \ell \leq n$.  
\end{enumerate}
\end{Lemma}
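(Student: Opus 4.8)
The plan is to first record the combinatorial structure of a maximal ranked poset $P = P_0 \cup \cdots \cup P_n$ of rank $n$, and then run two parallel case analyses, one for each part. Here is the structural input. By definition of a (maximal ranked) graded poset, two elements of $P$ are comparable if and only if they lie in different ranks --- the one of smaller rank being the smaller element --- and every $P_i$ is nonempty; hence ${\rm Com}(P)$ is the complete multipartite graph with parts $P_0, \ldots, P_n$. Consequently a nonempty $Q \subseteq P$ is connected in $P$ if and only if $|Q| = 1$ or $Q$ meets at least two of the $P_i$; equivalently, $Q$ is \emph{disconnected} exactly when $Q \subseteq P_i$ for a single $i$ and $|Q| \geq 2$. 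Moreover every antichain of $P$ is contained in a single $P_i$, and under the bijection $A \mapsto \langle A\rangle$, $I \mapsto {\rm max}(I)$, a nonempty antichain $A \subseteq P_\ell$ corresponds to the poset ideal $\langle A\rangle = P_0 \cup \cdots \cup P_{\ell-1} \cup A$, with $\emptyset \mapsto \emptyset$; in particular $I = \langle {\rm max}(I)\rangle$ for every ideal $I$.

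For part (a), I would use Lemma~\ref{edge} to restate ``$\{I,J\} \in E^{*}_{\mathcal{O}(P)}$'' as: after relabeling, $I \subsetneq J$ with $J\setminus I$ connected, and ${\rm max}(I)\triangle{\rm max}(J)$ disconnected (this set is never empty, since ${\rm max}(I)={\rm max}(J)$ forces $I=J$). Writing ${\rm max}(I)\subseteq P_a$ if $I\neq\emptyset$ and ${\rm max}(J)\subseteq P_b$, one has $a\leq b$ since $I\subseteq J$. If $I\neq\emptyset$ and $a=b$, then $J\setminus I = {\rm max}(I)\triangle{\rm max}(J)\subseteq P_a$ cannot be simultaneously connected and disconnected, so no pair arises; if $I\neq\emptyset$ and $a<b$, then ${\rm max}(I)\triangle{\rm max}(J)={\rm max}(I)\cup{\rm max}(J)$ meets the two ranks $a,b$ and is therefore connected, so again no pair arises. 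Hence $I=\emptyset$, and the two conditions collapse to ``$J=J\setminus I$ is connected'' and ``${\rm max}(J)={\rm max}(I)\triangle{\rm max}(J)$ is disconnected'', the latter being $|{\rm max}(J)|\geq2$ --- exactly the claimed description.

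For part (b), Lemma~\ref{edge} restates ``$\{A,B\}\in E^{*}_{\mathcal{C}(P)}$'' as: $A\triangle B$ connected, and $\langle A\rangle,\langle B\rangle$ either incomparable or comparable with disconnected set-difference. Write $A\subseteq P_a$, $B\subseteq P_b$ when nonempty, relabeling so that $A=\emptyset$ if either set is empty and $a\leq b$ otherwise. If $A=\emptyset$, connectedness of $A\triangle B=B$ forces $|B|=1$; but then $\langle A\rangle=\emptyset\subsetneq\langle B\rangle$ with $\langle B\rangle$ connected, an edge of $\mathcal{O}(P)$, so the pair is excluded. If $A,B\neq\emptyset$ and $a=b$, connectedness of $A\triangle B\subseteq P_a$ forces $|A\triangle B|=1$, so $A,B$ differ by a single point and $\langle A\rangle,\langle B\rangle$ are comparable with singleton difference, again an edge of $\mathcal{O}(P)$, so excluded. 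Thus $a<b$; then $A\triangle B=A\cup B$ is automatically connected, $\langle A\rangle\subsetneq\langle B\rangle$, and
\[
\langle B\rangle\setminus\langle A\rangle=(P_a\setminus A)\cup P_{a+1}\cup\cdots\cup P_{b-1}\cup B,
\]
which meets rank $b$ through $B$. By the structural input this set is disconnected exactly when it lies entirely in rank $b$ with at least two elements, i.e.\ when $P_a\setminus A=\emptyset$, no rank lies strictly between $a$ and $b$, and $|B|\geq2$ --- equivalently $A=P_a$, $b=a+1$, $|B|\geq2$. Putting $\ell:=b$ gives $A=P_{\ell-1}$, $B\subseteq P_\ell$, $|B|\geq2$ with $1\leq\ell\leq n$, as claimed.

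The only place demanding real care will be part (b): one must remember that ``not an edge of $\mathcal{O}(P)$'' includes the case of \emph{incomparable} ideals $\langle A\rangle,\langle B\rangle$ and dispose of it in each subcase, and one must compute $\langle B\rangle\setminus\langle A\rangle$ correctly across all configurations of $a$ and $b$, including the degenerate ones where $A=P_a$ or $b=a+1$. Nonemptiness of every $P_i$ is used tacitly --- for instance to see that $\langle B\rangle$ is connected when $A=\emptyset$, and that $P_{a+1}\cup\cdots\cup P_{b-1}\neq\emptyset$ when $b\geq a+2$. Everything else is straightforward bookkeeping once the structural input is in place.
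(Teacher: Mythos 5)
Your proposal is correct and follows essentially the same route as the paper: reduce membership in $E^{*}_{\mathcal{O}(P)}$ and $E^{*}_{\mathcal{C}(P)}$ to the edge criteria of Lemma \ref{edge}, observe that the comparability graph of a maximal ranked poset is complete multipartite on the ranks (so a subset is disconnected exactly when it sits in a single rank with at least two elements), and then rule out all configurations except $I=\emptyset$ in (a) and $A=P_{\ell-1}$, $B\subset P_{\ell}$, $|B|\geq 2$ in (b). Your treatment is if anything slightly more explicit than the paper's (e.g.\ the $A=\emptyset$ subcase in (b) and the formula for $\langle B\rangle\setminus\langle A\rangle$), but the substance is identical.
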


\begin{proof}
(a)
{\bf (``If")}
From Lemma \ref{edge}(a), if $I = \emptyset$ and $J$ is connected, then ${\rm conv}(\{\rho(I),\rho(J)\})$ forms an edge of $\mathcal{O}(P)$.
Since $|{\rm max}(J)| \geq 2$, ${\rm max}(I)\,\triangle\,{\rm max}(J)={\rm max}(J)$ is disconnected.
Thus, by Lemma \ref{edge}(b), ${\rm conv}(\{\rho({\rm max}(I)),\rho({\rm max}(J))\})$ does not form an edge of $\mathcal{C}(P)$.
\\
{\bf (``Only if")}
According to Lemma \ref{edge}, $\{I, J\} \in E^{*}_{\mathcal{O}(P)}$ if and only if $I \subset J$, $J \setminus I$ is connected, and ${\rm max}(I)\,\triangle\,{\rm max}(J)$ is disconnected in $P$.
If ${\rm max}(I)\triangle{\rm max}(J)$ is disconnected, then  either ${\rm max}(I)=\emptyset$ and $|{\rm max}(J)| \geq 2$, or ${\rm max}(I),{\rm max}(J)$ are subsets of $P_k$, where $0 \leq k \leq n$, since $P$ is a maximal ranked poset.
If ${\rm max}(I),{\rm max}(J) \subset P_k$, then 
$
{\rm max}(I)\;\triangle\;{\rm max}(J)
=
{\rm max}(J)\setminus{\rm max}(I)
=
J \setminus I$,
since $I \subset J$, namely ${\rm max}(I)\subset{\rm max}(J)$.
Since $J \setminus I$ is connected and ${\rm max}(I)\;\triangle\,{\rm max}(J)$ is disconnected, this leads to a contradiction.
Hence, we have ${\rm max}(I)=\emptyset$ and $|{\rm max}(J)| \geq 2$, as desired. 

\medskip
(b)
{\bf (``If")}
If there exists some $\ell$ such that $A = P_{\ell -1}$, $B \subset P_{\ell}$, and $|B| \geq 2$, where $1 \leq \ell \leq n$, then $A \triangle B$ is connected.
Then, from Lemma \ref{edge}(b), ${\rm conv}(\{\rho(A),\rho(B)\})$ forms an edge of $\mathcal{C}(P)$.
Furthermore, $\langle A \rangle \subset \langle B \rangle$ and $\langle B \rangle \setminus \langle A \rangle$ is disconnected, because $\langle B \rangle \setminus \langle A \rangle = B$ and $|B| \geq 2$.
Hence, by Lemma \ref{edge}(a), ${\rm conv}(\{\rho(\langle A \rangle),\rho(\langle B \rangle))$ does not form an edge of $\mathcal{O}(P)$.
\\
{\bf (``Only if")}
According to Lemma \ref{edge}, $\{A, B\} \in E^{*}_{\mathcal{C}(P)}$ if and only if the following conditions hold:
\begin{itemize}
\item
$A \triangle B$ is connected in $P$ ;
\item
$\langle A \rangle \setminus \langle B \rangle \neq \emptyset$ and $\langle B \rangle \setminus \langle A \rangle \neq \emptyset$ or $\langle A \rangle \subset \langle B \rangle$ and $\langle B \rangle \setminus \langle A \rangle$ is disconnected in $P$.
\end{itemize}
If $\langle A \rangle \setminus \langle B \rangle \neq \emptyset$ and $\langle B \rangle \setminus \langle A \rangle \neq \emptyset$, then there exists some $k$ such that $A,B \subset P_k$ and $A \setminus B \neq \emptyset$ and $B \setminus A \neq \emptyset$, where $0 \leq k \leq n$, because $P$ is a maximal ranked poset.
This contradicts $A \triangle B$ is connected.
Thus, assuming $\langle A \rangle \subset \langle B \rangle$ and $\langle B \rangle \setminus \langle A \rangle$ is disconnected, there exists some $\ell$ such that $\langle B \rangle \setminus \langle A \rangle \subset P_{\ell}$ and $|\langle B \rangle \setminus \langle A \rangle| \geq 2$.
In this case, either $A, B \subset P_{\ell}$ or $A=P_{\ell -1},B\subset P_{\ell}$ holds, but $A, B \subset P_{\ell}$ contradicts $A \triangle B$ is connected.
Hence, there exists some $\ell$ such that $A=P_{\ell -1},B\subset P_{\ell}$ and $|\langle B \rangle \setminus \langle A \rangle| = |B| \geq 2$, where $1 \leq  \ell \leq n$. 
\end{proof}

\begin{Definition}
Let $P$ be a finite poset.
Let $I$, $J$, and $K$ be pairwise distinct poset ideals of $P$, and
let $A$, $B$, and $C$ be pairwise distinct antichains of $P$.
We define the sets of triples corresponding to triangular $2$-faces of $\mathcal{O}(P)$ and $\mathcal{C}(P)$ as follows:
\begin{eqnarray*}
\Delta_{{\mathcal{O}}(P)}
&:=&
\bigl\{\{I,J,K\}\;
:
\;{\rm conv}(\{\rho(I), \rho(J), \rho(K)\})\; \mbox{is a 2-face of}\;\mathcal{O}(P)\bigr\};
\\
\Delta_{{\mathcal{C}}(P)}
&:=&
\bigl\{\{A,B,C\}\;
:
\;{\rm conv}(\{\rho(A), \rho(B), \rho(C)\})\;\mbox{is a 2-face of}\;\mathcal{C}(P)\bigr\};
\\
\Delta^{*}_{{\mathcal{O}}(P)}
&:=&
\bigl\{\{I,J,K\} \in \; \Delta_{\mathcal{O}(P)}
:
\mbox{at least one of}\;\{I,J\}, \{J,K\},\;\mbox{or}\; \{I,K\}\;\mbox{belongs to}\; E^{*}_{\mathcal{O}(P)}\bigr\};
\\
\Delta^{*}_{{\mathcal{C}}(P)}
&:=&
\bigl\{\{A,B,C\} \in \; \Delta_{\mathcal{C}(P)}
:
\mbox{at least one of}\;\{A,B\}, \{B,C\},\;\mbox{or}\; \{A,C\}\;\mbox{belongs to}\; E^{*}_{\mathcal{C}(P)}\bigr\}.
\end{eqnarray*}
\end{Definition}

\medskip
The first claim we wish to present is that the number of triangular $2$-faces of $\Oc(P)$ is less than or equal to that of $\Cc(P)$, namely $|\Delta_{{\mathcal{O}}(P)}| \leq |\Delta_{{\mathcal{C}}(P)}|$.
This assertion is guided by the following two lemmas.

\begin{Lemma}\label{triangle equality}
Let $P$ be a finite poset.
Then the number of triples in $\Delta_{\mathcal{O}(P)}\setminus\Delta^{*}_{\mathcal{O}(P)}$
equals that in $\Delta_{\mathcal{C}(P)}\setminus\Delta^{*}_{\mathcal{C}(P)}$.
\end{Lemma}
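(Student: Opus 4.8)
The plan is to prove the equality by producing an explicit bijection between the two sets. The natural candidate is the map sending a triple of poset ideals $\{I,J,K\}$ to the triple of antichains $\{{\rm max}(I),{\rm max}(J),{\rm max}(K)\}$; since ${\rm max}(\,\cdot\,)$ and $\langle\,\cdot\,\rangle$ are mutually inverse bijections between the poset ideals and the antichains of $P$, this is a bijection from the set of $3$-element sets of poset ideals onto the set of $3$-element sets of antichains, with inverse $\{A,B,C\}\mapsto\{\langle A\rangle,\langle B\rangle,\langle C\rangle\}$. It therefore suffices to check that this map carries $\Delta_{\mathcal{O}(P)}\setminus\Delta^{*}_{\mathcal{O}(P)}$ into $\Delta_{\mathcal{C}(P)}\setminus\Delta^{*}_{\mathcal{C}(P)}$ and that its inverse carries $\Delta_{\mathcal{C}(P)}\setminus\Delta^{*}_{\mathcal{C}(P)}$ into $\Delta_{\mathcal{O}(P)}\setminus\Delta^{*}_{\mathcal{O}(P)}$. (Note that the hypothesis of maximal rankedness plays no role here; $P$ is an arbitrary finite poset.)

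For the forward inclusion, take $\{I,J,K\}\in\Delta_{\mathcal{O}(P)}\setminus\Delta^{*}_{\mathcal{O}(P)}$. Since a triangular $2$-face is a triangle, each of its three sides is an edge of the ambient polytope, so ${\rm conv}(\{\rho(I),\rho(J)\})$, ${\rm conv}(\{\rho(J),\rho(K)\})$, ${\rm conv}(\{\rho(I),\rho(K)\})$ are all edges of $\mathcal{O}(P)$; as the triple lies outside $\Delta^{*}_{\mathcal{O}(P)}$, none of $\{I,J\},\{J,K\},\{I,K\}$ lies in $E^{*}_{\mathcal{O}(P)}$, and hence, by the definition of $E^{*}_{\mathcal{O}(P)}$, each of ${\rm conv}(\{\rho({\rm max}(I)),\rho({\rm max}(J))\})$, ${\rm conv}(\{\rho({\rm max}(J)),\rho({\rm max}(K))\})$, ${\rm conv}(\{\rho({\rm max}(I)),\rho({\rm max}(K))\})$ is an edge of $\mathcal{C}(P)$. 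By Lemma \ref{edge}(b) the symmetric differences ${\rm max}(I)\triangle{\rm max}(J)$, ${\rm max}(J)\triangle{\rm max}(K)$, ${\rm max}(I)\triangle{\rm max}(K)$ are connected in $P$, and the three antichains are pairwise distinct, so Lemma \ref{2-face}(b) gives $\{{\rm max}(I),{\rm max}(J),{\rm max}(K)\}\in\Delta_{\mathcal{C}(P)}$. To see it avoids $\Delta^{*}_{\mathcal{C}(P)}$, note that for each pair, say $\{{\rm max}(I),{\rm max}(J)\}$, one has $\langle{\rm max}(I)\rangle=I$ and $\langle{\rm max}(J)\rangle=J$, and ${\rm conv}(\{\rho(I),\rho(J)\})$ is an edge of $\mathcal{O}(P)$, so $\{{\rm max}(I),{\rm max}(J)\}\notin E^{*}_{\mathcal{C}(P)}$; the same applies to the other two pairs.

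The reverse inclusion is the mirror image, with one extra point. Starting from $\{A,B,C\}\in\Delta_{\mathcal{C}(P)}\setminus\Delta^{*}_{\mathcal{C}(P)}$, the same reasoning shows that ${\rm conv}(\{\rho(\langle A\rangle),\rho(\langle B\rangle)\})$, ${\rm conv}(\{\rho(\langle B\rangle),\rho(\langle C\rangle)\})$, ${\rm conv}(\{\rho(\langle A\rangle),\rho(\langle C\rangle)\})$ are all edges of $\mathcal{O}(P)$. By Lemma \ref{edge}(a) each of the three pairs $\langle A\rangle,\langle B\rangle$, etc., is comparable under inclusion, and three pairwise inclusion-comparable sets form a chain; relabeling so that $\langle A\rangle\subset\langle B\rangle\subset\langle C\rangle$, the same lemma gives that $\langle B\rangle\setminus\langle A\rangle$, $\langle C\rangle\setminus\langle B\rangle$, $\langle C\rangle\setminus\langle A\rangle$ are connected, so Lemma \ref{2-face}(a) yields $\{\langle A\rangle,\langle B\rangle,\langle C\rangle\}\in\Delta_{\mathcal{O}(P)}$; and exactly as above, using ${\rm max}\langle A\rangle=A$ etc. together with the fact that ${\rm conv}(\{\rho(A),\rho(B)\})$ is an edge of $\mathcal{C}(P)$, this triple avoids $\Delta^{*}_{\mathcal{O}(P)}$. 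Combining the two inclusions with the fact that the two maps are mutually inverse yields the equality of cardinalities.

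I expect the only care needed is the routine bookkeeping of translating between "${\rm conv}(\{\rho(I),\rho(J)\})$ is an edge of $\mathcal{O}(P)$ / $\mathcal{C}(P)$" and "$\{I,J\}\in E^{*}$", together with the small observation in the reverse direction that pairwise inclusion-comparability of the three generated ideals is precisely what makes Lemma \ref{2-face}(a) applicable; no genuine obstacle arises, since everything reduces to Lemmas \ref{edge} and \ref{2-face} read through the ideal–antichain bijection.
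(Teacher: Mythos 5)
Your proof is correct and is essentially the paper's argument: the paper phrases it as counting triangles in the two graphs obtained by deleting the $E^{*}$-edges from the $1$-skeletons, which are "clearly isomorphic" under the ideal--antichain bijection $I\mapsto{\rm max}(I)$, and your explicit two-sided verification is precisely the content of that isomorphism claim. The only difference is one of detail, not of method.
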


\begin{proof}
From Lemma \ref{2-face}, the set of triples $\{\rho(I),\rho(J),\rho(K)\}$ where $\{I,J,K\} \in \Delta_{\mathcal{O}(P)}\setminus\Delta^{*}_{\mathcal{O}(P)}$, corresponds to the set of triangles in the graph obtained by removing the all edges $\{\rho(I_1),\rho(I_2)\}$, where $\{I_1,I_2\} \in E^{*}_{\mathcal{O}(P)}$ from the $1$-skeleton of ${\mathcal{O}}(P)$.
Similarly, the set of triples $\{\rho(A),\rho(B),\rho(C)\}$ where $\{A,B,C\} \in \Delta_{\mathcal{C}(P)}\setminus\Delta^{*}_{\mathcal{C}(P)}$, corresponds to the set of triangles in the graph obtained by removing the all edges $\{\rho(A_1),\rho(A_2)\}$, where $\{A_1,A_2\} \in E^{*}_{\mathcal{C}(P)}$ from the $1$-skeleton of ${\mathcal{C}}(P)$.
Since these graphs are clearly isomorphic, the number of triangles in them coincides.
\end{proof}

\begin{Lemma}\label{triangle inequality} 
Let $P$ be a maximal ranked poset.
Then the number of triples in $\Delta^{*}_{\mathcal{O}(P)}$
is less than or equal to that in $\Delta^{*}_{\mathcal{C}(P)}$.
\end{Lemma}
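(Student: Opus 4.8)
The plan is to construct an explicit injection $\Phi\colon\Delta^{*}_{\mathcal{O}(P)}\hookrightarrow\Delta^{*}_{\mathcal{C}(P)}$; since the inequality $|\Delta^{*}_{\mathcal{O}(P)}|\leq|\Delta^{*}_{\mathcal{C}(P)}|$ is exactly the content of the lemma, this suffices.

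I would first record the rigid combinatorics of a maximal ranked poset $P=\bigcup_{i=0}^{n}P_i$, writing $P_{<k}:=P_0\cup\cdots\cup P_{k-1}$. Since ranks are respected along maximal chains and any two elements of distinct ranks are comparable, one gets: each $P_i\neq\emptyset$; if $i<j$ then every element of $P_i$ lies below every element of $P_j$; hence every antichain of $P$ is contained in a single $P_k$, every nonempty poset ideal equals $P_{<k}\cup S$ for a unique $k$ with $\emptyset\neq S\subseteq P_k$, and then ${\rm max}(P_{<k}\cup S)=S$. One also needs a connectivity dictionary: a subset of a single $P_k$ is connected in $P$ iff it has at most one element; the ideal $P_{<k}\cup S$ is connected in $P$ iff $k\geq 1$ or $|S|=1$; and for poset ideals $J=P_{<a}\cup S\subsetneq K=P_{<b}\cup T$ the set $K\setminus J$ is connected in $P$ iff $a=b$ and $|T\setminus S|=1$, or $b=a+1$ and either $S\subsetneq P_a$ or $|T|=1$, or $b\geq a+2$. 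Feeding these into Lemma~\ref{2-face}(a) and Lemma~\ref{OCedge}(a), every member of $\Delta^{*}_{\mathcal{O}(P)}$ has the form $\{\emptyset,J,K\}$ with $\emptyset\subsetneq J\subsetneq K$, where $J$, $K$ and $K\setminus J$ are connected and $|{\rm max}(J)|\geq 2$ or $|{\rm max}(K)|\geq 2$; and by Lemma~\ref{OCedge}(b) the members of $E^{*}_{\mathcal{C}(P)}$ are exactly the pairs $\{P_{\ell-1},B\}$ with $\emptyset\neq B\subseteq P_\ell$ and $|B|\geq 2$.

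Given $\{\emptyset,J,K\}\in\Delta^{*}_{\mathcal{O}(P)}$, let $a,b$ be the ranks with ${\rm max}(J)\subseteq P_a$ and ${\rm max}(K)\subseteq P_b$ (so $a\leq b$), and put
\[
\Phi(\{\emptyset,J,K\})=
\begin{cases}
\{P_{a-1},\ {\rm max}(J),\ {\rm max}(K)\}, & \text{if }|{\rm max}(J)|\geq 2\ \text{or}\ a=b,\\[3pt]
\{P_{a},\ {\rm max}(K),\ P_{a}\setminus{\rm max}(J)\}, & \text{if }|{\rm max}(J)|=1\ \text{and}\ b=a+1,\\[3pt]
\{P_{b-1},\ {\rm max}(K),\ {\rm max}(J)\}, & \text{if }|{\rm max}(J)|=1\ \text{and}\ b\geq a+2.
\end{cases}
\]
Note $a\geq 1$ in the first line (from connectivity of $J$ when $|{\rm max}(J)|\geq 2$, and from connectivity of $K$ when $a=b$, since then $|{\rm max}(K)|=|{\rm max}(J)|+1\geq 2$); in the last two lines $|{\rm max}(K)|\geq 2$ (as $|{\rm max}(J)|=1$ and $\{\emptyset,J,K\}\in\Delta^{*}_{\mathcal{O}(P)}$), and $P_a\setminus{\rm max}(J)\neq\emptyset$ in the second line (forced by connectivity of $K\setminus J$). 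Using the connectivity dictionary one checks, case by case, that each image consists of three distinct antichains whose three pairwise symmetric differences are each either a single-element set or a union of two nonempty antichains in distinct ranks — hence connected (complete bipartite in ${\rm Com}(P)$) — so by Lemma~\ref{2-face}(b) it is a $2$-face of $\mathcal{C}(P)$. Moreover each image contains a pair from $E^{*}_{\mathcal{C}(P)}$: one may take $\{P_{a-1},{\rm max}(J)\}$ (or $\{P_{a-1},{\rm max}(K)\}$ when $|{\rm max}(J)|=1$) in the first line, $\{P_a,{\rm max}(K)\}$ in the second, and $\{P_{b-1},{\rm max}(K)\}$ in the third, each being of the form $\{P_{\ell-1},B\}$ with $|B|\geq 2$. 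Hence $\Phi(\{\emptyset,J,K\})\in\Delta^{*}_{\mathcal{C}(P)}$.

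It remains to prove $\Phi$ injective. Given a triple $\{X,Y,Z\}$ in the image, the multiset of ranks it occupies determines the defining line in all but one configuration: a repeated rank with the odd one below forces the first line with $a=b$; a repeated rank with the odd one above forces the second line; three distinct ranks having a unique consecutive pair with the remaining rank separated from that pair by a gap force the first line (remaining rank above) or the third line (remaining rank below). In each of these one reconstructs $(J,K)$ from knowing which of $X,Y,Z$ are full ranks $P_i$, which are singletons, and how the two antichains lying in a common rank are nested (two nested antichains give ${\rm max}(J)\subsetneq{\rm max}(K)$, and the pair $P_a,\,P_a\setminus\{s\}$ recovers the point $s$). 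The one genuinely delicate fibre is that of three consecutive ranks $\{r-1,r,r+1\}$, which can arise from the first line (with $a=r$, $b=r+1$) or the third line (with $a=r-1$, $b=r+1$); these are separated by testing in turn whether the rank-$r$ antichain equals $P_r$, whether the rank-$(r-1)$ antichain equals $P_{r-1}$, and — in the residual degeneracy $|P_{r-1}|=1$ — whether the rank-$(r+1)$ antichain is a singleton. Completing this finite bookkeeping gives the injectivity of $\Phi$, hence $|\Delta^{*}_{\mathcal{O}(P)}|\leq|\Delta^{*}_{\mathcal{C}(P)}|$. Disentangling the three-consecutive-ranks fibre, together with its $|P_i|=1$ degeneracies, is the main obstacle.
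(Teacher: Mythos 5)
Your injection is, case for case, exactly the map $\phi$ in the paper's proof (your second line's output $\{P_a,\,\mathrm{max}(K),\,P_a\setminus \mathrm{max}(J)\}$ coincides with the paper's $\{P_{k-1},\,\mathrm{min}(K\setminus J),\,\mathrm{max}(K)\}$, and your first and third lines together reproduce its Cases 1 and 2), so the argument is correct and essentially identical to the paper's. If anything, your injectivity check is more thorough, since you explicitly rule out collisions between the different branches of the definition (in particular the three-consecutive-ranks fibre, which you resolve correctly via the $|P_{r-1}|=1$ degeneracy), whereas the paper only addresses collisions within each case.
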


\begin{proof}
From Lemma \ref{2-face}(a) and Lemma \ref{OCedge}(a), the following conditions hold for $\{I,J,K\} \in \Delta^{*}_{\mathcal{O}(P)}$, where $I$, $J$, and $K$ be pairwise distinct poset ideals of $P$
:
\begin{itemize}
\item 
$I = \emptyset$, $J \subset K$ and $J$, $K$, and $K \setminus J$ are connected in $P$;
\item 
$|{\rm max}(J)| \geq 2$ or $|{\rm max}(K)| \geq 2$.
\end{itemize}
Let $P$ has rank $n$.
Since $P$ is a maximal ranked poset, we may assume that ${\rm max}(J) \subset P_j$ and ${\rm max}(K) \subset P_k$, where $0 \leq j \leq k \leq n$.
We define the map $\phi\;:\;\Delta^{*}_{\mathcal{O}(P)} \rightarrow \Delta^{*}_{\mathcal{C}(P)}$ by
\[
\{I,J,K\} \mapsto 
\begin{cases}
\{P_{j-1},\; {\rm max}(J),\; {\rm max}(K)\} & \mbox{if}\;\; |{\rm max}(J)| \geq 2,\\
\{P_{k-1},\; {\rm max}(J),\; {\rm max}(K)\} & \mbox{if}\;\; |{\rm max}(J)| = 1 \;\mbox{and}\; k-j \neq 1,\\
\{P_{k-1},\; {\rm min}(K \setminus J),\; {\rm max}(K)\} & \mbox{if}\;\; |{\rm max}(J)| = 1 \;\mbox{and}\; k-j = 1.
\end{cases}
\]
We will show that the map is an injection.
\\
\\
\medskip
\noindent
{\bf Case 1.} $|{\rm max}(J)| \geq 2.$\\
\indent
Since $J$ is connected, we may assume that $j \geq 1$.
Now, $P_{j-1}$, ${\rm max}(J)$ and ${\rm max}(K)$ are pairwise distinct antichains of $P$. 
As $P$ is a maximal ranked poset, $P_{j-1} \triangle {\rm max}(J)$ and $P_{j-1} \triangle {\rm max}(K)$ are connected.
If $j < k$, then ${\rm max}(J) \triangle {\rm max}(K)$ is connected.
If $j = k$, then ${\rm max}(J)\;\triangle\;{\rm max}(K)={\rm max }(K)\setminus{\rm max}(J)=K \setminus J,$ because $J \subset K$, namely ${\rm max}(J)\subset{\rm max}(K)$.
Since $K \setminus J$ is connected, ${\rm max}(J) \triangle {\rm max}(K)$ is  also connected. 
Furthermore, since $|{\rm max}(J)| \geq 2$, from Lemma \ref{OCedge}(b), we have $\{P_{j-1}, {\rm max}(J)\} \in E^{*}_{\mathcal{C}(P)}$, implying $\{P_{j-1},{\rm max}(J),{\rm max}(K)\} \in \Delta^{*}_{\mathcal{C}(P)}$.

\medskip
\noindent
{\bf Case 2.} $|{\rm max}(J)| = 1$ and $k-j \neq 1$.\\
\indent
Now, $|{\rm max}(K)| \geq 2$.
Since $P$ is a maximal ranked poset and $k - j \neq 1$, ${\rm max}(J) \not\subset P_{k-1}$.
Thus $P_{k-1}$, ${\rm max}(J)$ and ${\rm max}(K)$ are pairwise distinct antichains of $P$ and $P_{k-1} \triangle {\rm max}(J)$ and $P_{k-1} \triangle {\rm max}(K)$ are connected.
If $k-j \geq 2$, then ${\rm max}(J) \triangle {\rm max}(K)$ are connected.
If $k-j = 0$, then we can show ${\rm max}(J) \triangle {\rm max}(K)$ is connected by the same argument in Case 1.
Furthermore, since $|{\rm max}(K)| \geq 2$, from Lemma \ref{OCedge}(b), we have $\{P_{k-1}, {\rm max}(K)\} \in E^{*}_{\mathcal{C}(P)}$, implying $\{P_{k-1},{\rm max}(J),{\rm max}(K)\} \in \Delta^{*}_{\mathcal{C}(P)}$.

\medskip
\noindent
{\bf Case 3.} $|{\rm max}(J)| = 1$ and $k-j = 1$.\\
\indent
Now, $|{\rm max}(K)| \geq 2$.
Since $P$ is a maximal ranked poset and $k - j = 1$, ${\rm min}(K \setminus J) = P_j \setminus {\rm max}(J) = P_{k-1} \setminus {\rm max}(J) \subset P_{k-1}$.
Thus $P_{k-1}$, ${\rm min}(K \setminus J)$ and ${\rm max}(K)$ are pairwise distinct antichains of $P$ and $P_{k-1} \triangle {\rm max}(K)$ and ${\rm min}(K \setminus J)\triangle {\rm max}(K)$ are connected.
Since ${\rm min}(K \setminus J) = P_{k-1} \setminus {\rm max}(J)$, then $P_{k-1} \triangle {\rm min}(K \setminus J) = {\rm max}(J)$.
Hence, $P_{k-1} \triangle {\rm min}(K \setminus J)$ is connected because $|{\rm max}(J)|=1$.
Furthermore, since $|{\rm max}(K)| \geq 2$, from Lemma \ref{OCedge}(b), we have $\{P_{k-1}, {\rm max}(K)\} \in E^{*}_{\mathcal{C}(P)}$, implying $\{P_{k-1},{\rm max}(J),{\rm max}(K)\} \in \Delta^{*}_{\mathcal{C}(P)}$. 
\\

\medskip
In Case 1 or Case 2, the map is clearly injective.
In Case 3, let $\{\emptyset, J, K \}$ and $\{\emptyset, J', K' \}$ belong to $\Delta^{*}_{\mathcal{O}(P)}$.
We note the following holds:
\begin{itemize}
\item 
${\rm max}(J') \subset P_{j'}$ and ${\rm max}(K') \subset P_{k'}$, where $0 \leq j' \leq k' \leq n$;
\item
$|{\rm max}(J')|=1$ and $k'-j'=1$.
\end{itemize}
Suppose that $P_{k-1} = P_{k'-1}$, ${\rm min}(K \setminus J)={\rm min}(K' \setminus J')$, and ${\rm max}(K)={\rm max}(K')$.
Then $K=K'$.
Since $k - j = k' - j' = 1$, ${\rm max}(K \setminus J) = {\rm max}(K) = {\rm max}(K') = {\rm max}(K' \setminus J')$.
Thus ${\rm max}(K \setminus J) = {\rm max}(K' \setminus J')$.
Now, we have ${\rm min}(K \setminus J) = {\rm min}(K \setminus J')$ and ${\rm max}(K \setminus J) = {\rm max}(K \setminus J')$ by $K=K'$.
Since $K \setminus J = {\rm min}(K \setminus J) \cup {\rm max}(K \setminus J)$, it follows that $K \setminus J = K \setminus J'$, namely $J=J'$, as desired.
\end{proof}

\medskip
From Lemma \ref{triangle equality} and Lemma \ref{triangle inequality}, it follows that $|\Delta_{\mathcal{O}(P)}| \leq |\Delta_{\mathcal{C}(P)}|$.
Consequently, we obtain the following.

\begin{Corollary}
Let $P$ be a maximal ranked poset.
Then the number of triangular $2$-faces of $\mathcal{O}(P)$ is less than or equal to that of $\mathcal{C}(P)$.
\end{Corollary}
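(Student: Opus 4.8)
The plan is to deduce the Corollary as an immediate counting consequence of Lemma \ref{triangle equality} and Lemma \ref{triangle inequality}, once triangular $2$-faces are identified with triples of poset ideals (resp.\ antichains).

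First I would record that, by Lemma \ref{2-face} and the Remark following it, the triangular $2$-faces of $\mathcal{O}(P)$ are precisely the triangles of its $1$-skeleton, hence are in bijection with the triples in $\Delta_{\mathcal{O}(P)}$; the same holds for $\mathcal{C}(P)$ and $\Delta_{\mathcal{C}(P)}$. So it suffices to show $|\Delta_{\mathcal{O}(P)}| \leq |\Delta_{\mathcal{C}(P)}|$.

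Next, since $\Delta^{*}_{\mathcal{O}(P)} \subseteq \Delta_{\mathcal{O}(P)}$ and $\Delta^{*}_{\mathcal{C}(P)} \subseteq \Delta_{\mathcal{C}(P)}$ by definition, I would partition each side as
\[
|\Delta_{\mathcal{O}(P)}| = \bigl|\Delta_{\mathcal{O}(P)}\setminus\Delta^{*}_{\mathcal{O}(P)}\bigr| + \bigl|\Delta^{*}_{\mathcal{O}(P)}\bigr|, \qquad |\Delta_{\mathcal{C}(P)}| = \bigl|\Delta_{\mathcal{C}(P)}\setminus\Delta^{*}_{\mathcal{C}(P)}\bigr| + \bigl|\Delta^{*}_{\mathcal{C}(P)}\bigr|.
\]
Lemma \ref{triangle equality} equates the first summands on the two sides, and Lemma \ref{triangle inequality} --- this is where the hypothesis that $P$ is maximal ranked enters --- gives $\bigl|\Delta^{*}_{\mathcal{O}(P)}\bigr| \leq \bigl|\Delta^{*}_{\mathcal{C}(P)}\bigr|$ for the second summands. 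Adding the two relations yields $|\Delta_{\mathcal{O}(P)}| \leq |\Delta_{\mathcal{C}(P)}|$, as required.

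I do not expect any genuine obstacle in the Corollary itself, since all the real content sits in the two lemmas; the only points worth a second glance are that the two displayed decompositions are genuine disjoint unions (immediate from $\Delta^{*}\subseteq\Delta$) and that Lemma \ref{triangle inequality} is applied only under its stated hypothesis that $P$ is maximal ranked. In retrospect, the delicate part of the whole development is the construction and injectivity of the map $\phi:\Delta^{*}_{\mathcal{O}(P)}\to\Delta^{*}_{\mathcal{C}(P)}$ inside Lemma \ref{triangle inequality}, in particular the three-case split according to the value of $k-j$ and the recovery of $J$ from ${\rm min}(K\setminus J)$ and ${\rm max}(K\setminus J)$ when $k-j=1$.
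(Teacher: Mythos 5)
Your proof is correct and matches the paper's argument exactly: the paper likewise deduces the Corollary by splitting $\Delta_{\mathcal{O}(P)}$ and $\Delta_{\mathcal{C}(P)}$ into the starred subsets and their complements, applying Lemma \ref{triangle equality} to the complements and Lemma \ref{triangle inequality} to the starred parts, and adding. No gaps; the identification of triangular $2$-faces with triangles in the $1$-skeleton via Lemma \ref{2-face} is also how the paper justifies the translation to counting triples.
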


Furthermore, we can characterize whether the number of triangular $2$-faces of $\Oc(P)$ is equal to that of $\Cc(P)$, namely $|\Delta_{\mathcal{O}(P)}| = |\Delta_{\mathcal{C}(P)}|$, by the $X$-poset from Figure 1.

\begin{Theorem}\label{main}
Let $P$ be a maximal ranked poset.
Then the number of triangular $2$-faces of $\mathcal{O}(P)$ is equal to that of $\mathcal{C}(P)$ if and only if the poset $P$ does not contain an $X$-poset as a subposet.
\end{Theorem}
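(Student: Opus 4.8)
The plan is to reduce the theorem to a surjectivity question about the injection $\phi$ already in hand. By the preceding Corollary, $|\Delta_{\mathcal{O}(P)}|\le|\Delta_{\mathcal{C}(P)}|$, so only the equality case is at issue. Lemma~\ref{triangle equality} gives $|\Delta_{\mathcal{O}(P)}\setminus\Delta^{*}_{\mathcal{O}(P)}|=|\Delta_{\mathcal{C}(P)}\setminus\Delta^{*}_{\mathcal{C}(P)}|$, and the proof of Lemma~\ref{triangle inequality} supplies an injection $\phi\colon\Delta^{*}_{\mathcal{O}(P)}\to\Delta^{*}_{\mathcal{C}(P)}$; subtracting,
\[
|\Delta_{\mathcal{C}(P)}|-|\Delta_{\mathcal{O}(P)}|=|\Delta^{*}_{\mathcal{C}(P)}|-|\Delta^{*}_{\mathcal{O}(P)}|,
\]
so the numbers of triangular $2$-faces of $\mathcal{O}(P)$ and $\mathcal{C}(P)$ coincide exactly when $\phi$ is surjective. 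The ``if'' direction is then immediate from \cite{HL2}: if $P$ contains no $X$-poset, then $\mathcal{O}(P)$ and $\mathcal{C}(P)$ are unimodularly equivalent, hence have isomorphic face lattices and in particular equally many triangular $2$-faces, so $|\Delta_{\mathcal{O}(P)}|=|\Delta_{\mathcal{C}(P)}|$.

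For the ``only if'' direction I would argue the contrapositive, exhibiting a member of $\Delta^{*}_{\mathcal{C}(P)}$ outside the image of $\phi$. The first step is the elementary observation that a subposet of a maximal ranked poset of rank $n$ isomorphic to the $X$-poset forces ranks $0\le a<b<c\le n$ with $c\ge a+2$, $|P_{a}|\ge 2$ and $|P_{c}|\ge 2$: the two minimal elements of the $X$-poset are incomparable, hence lie in a common rank $a$; likewise the two maximal elements in a common rank $c$; and the middle element in a rank strictly between, so $b$ exists with $a<b<c$. Fixing $2$-element subsets $Y\subseteq P_{a}$ and $X\subseteq P_{c}$, I would set $T:=\{P_{c-1},X,Y\}$. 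Its three antichains $Y$, $P_{c-1}$, $X$ occupy the pairwise distinct ranks $a$, $c-1$, $c$ (since $a\le c-2$), hence are pairwise distinct; as any two elements of a maximal ranked poset in distinct ranks are comparable, each of $P_{c-1}\triangle X$, $X\triangle Y$, $Y\triangle P_{c-1}$ is connected, so $T\in\Delta_{\mathcal{C}(P)}$ by Lemma~\ref{2-face}(b), while $\{P_{c-1},X\}\in E^{*}_{\mathcal{C}(P)}$ by Lemma~\ref{OCedge}(b) with $\ell=c$; therefore $T\in\Delta^{*}_{\mathcal{C}(P)}$.

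It then remains to check that $T\ne\phi(\{I,J,K\})$ for every $\{I,J,K\}\in\Delta^{*}_{\mathcal{O}(P)}$, which I would do by inspecting the three cases in the definition of $\phi$; since the three antichains of $T$ lie in distinct ranks, any matching of antichains is forced by ranks. In the Case~$3$ output $\{P_{k-1},\,{\rm min}(K\setminus J),\,{\rm max}(K)\}$ the antichains $P_{k-1}$ and ${\rm min}(K\setminus J)=P_{k-1}\setminus{\rm max}(J)$ both lie in rank $k-1$, which $T$ cannot accommodate. In the Case~$2$ output $\{P_{k-1},\,{\rm max}(J),\,{\rm max}(K)\}$ the ranks of $P_{k-1}$ and ${\rm max}(K)$ are consecutive while ${\rm max}(J)$ is a singleton, and matching this against $\{a,c-1,c\}$ pushes the singleton onto $X$ or $Y$, contradicting $|X|=|Y|=2$. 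In the Case~$1$ output $\{P_{j-1},\,{\rm max}(J),\,{\rm max}(K)\}$ the rank pattern $j-1<j\le k$ matches $\{a,c-1,c\}$ only when $c=a+2$, in which case it forces ${\rm max}(J)=P_{c-1}$ and ${\rm max}(K)=X$, so $K\setminus J=X$ is disconnected, contradicting $\{I,J,K\}\in\Delta_{\mathcal{O}(P)}$. Hence $\phi$ is not surjective, so $|\Delta^{*}_{\mathcal{O}(P)}|<|\Delta^{*}_{\mathcal{C}(P)}|$ and thus $|\Delta_{\mathcal{O}(P)}|<|\Delta_{\mathcal{C}(P)}|$; together with the first direction this proves the theorem.

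The step I expect to require genuine care is this last case analysis, above all the degenerate configurations in which $X$ or $Y$ happens to equal a full rank level, or $|P_{c-1}|=1$, so that $T$ carries two full-level antichains and the rank-matching splits into extra branches. To keep these under control I would first record that in a maximal ranked poset every poset ideal $I$ has the shape $P_{0}\cup\cdots\cup P_{j-1}\cup{\rm max}(I)$ with ${\rm max}(I)\subseteq P_{j}$, equivalently $\langle S\rangle=P_{0}\cup\cdots\cup P_{j-1}\cup S$ for an antichain $S\subseteq P_{j}$; with this normal form the poset ideals entering the three cases, and the connectivity of the sets $K\setminus J$, become transparent and the branches close quickly.
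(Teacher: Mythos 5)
Your proposal is correct and follows essentially the same route as the paper: the ``if'' direction via unimodular equivalence from \cite{HL2}, and the ``only if'' direction by exhibiting a triangle in $\Delta^{*}_{\mathcal{C}(P)}$ built from the rank levels meeting the $X$-poset and checking against the three cases of $\phi$ that it has no preimage. The only cosmetic difference is your choice of witness ($2$-element subsets $X$, $Y$ of the top and bottom levels together with the full level $P_{c-1}$, rather than the paper's three full levels $P_{s-t}$, $P_{s-1}$, $P_{s}$), and your rank-matching argument is already airtight because every nonempty antichain of a maximal ranked poset lies in a single rank level, so the degenerate branches you flag at the end do not actually arise.
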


\begin{proof}
{\bf (``If")}
The poset $P$ does not contain an $X$-poset from as a subposet if and only if $\mathcal{O}(P)$ and $\mathcal{C}(P)$ are unimodularly equivalent by \cite[Theorem 2.1]{HL2}.
Thus, the $1$-skeleton of $\mathcal{O}(P)$ and that of $\mathcal{C}(P)$ are isomorphic as finite graphs.
In particular, the number of triangles in those $1$-skeletons coincides. 
Therefore $|\Delta_{\mathcal{O}(P)}| = |\Delta_{\mathcal{C}(P)}|$.
\\
{\bf (``Only if")}
Suppose $P$ has rank $n$, and that the poset $P$ contains an $X$-poset as a subposet.
Let the $X$-poset be $\{a, b, c, d, e\}$, where each element is pairwise distinct such that (i) $a$ and $b$ are incomparable, (ii) $d$ and $e$ are incomparable, and (iii) $a < c < d$, $b < c < e$.
We may assume that $d, e \in P_{s}$, $ c \in P_{s - 1}$ and $a, b \in P_{s - t}$, where $2 \leq s \leq n$ and $2 \leq t \leq s$.
Now, $P_{s - t}$, $P_{s -1}$, and $P_{s}$ are pairwise distinct antichains.
As $P$ is a maximal ranked poset, $P_{s - t} \triangle P_{s -1}$, $P_{s - 1} \triangle P_{s}$, and $P_{s - t} \triangle P_{s}$ are connected in $P$.
Since $|P_{s}| \geq 2$, from Lemma \ref{OCedge}(b), $\{P_{s -1}, P_{s}\} \in E^{*}_{\mathcal{C}(P)}$, implying $\{P_{s- t}, P_{s - 1}, P_{s}\} \in \Delta^{*}_{\mathcal{C}(P)}$. 
We will show that there does not exist any $\{I, J, K\} \in \Delta^{*}_{\mathcal{O}(P)}$ such that $\phi(\{I, J, K\}) = \{P_{s - t}, P_{s - 1}, P_{s}\}$, where $\phi$ is the injection obtained in the proof of Lemma \ref{triangle inequality}.

\medskip
\noindent
{\bf Case 1.}
Suppose that there exists some $\{I, J, K\} \in \Delta^{*}_{\mathcal{O}(P)}$ such that $|{\rm max}(J)| \geq 2$ and $\phi(\{I, J, K\})$ coincides $\{P_{s - t}, P_{s - 1}, P_{s}\}$.
Since $|{\rm max}(J)| \geq 2$, then $\phi(\{I, J, K\}) = \{P_{j-1},\; {\rm max}(J),\; {\rm max}(K)\}$.
We may assume that $P_s = {\rm max}(K)$, $P_{s-1} = {\rm max}(J)$, and $P_{s-t} = P_{j-1}$.
Then $K = \langle P_{s} \rangle$ and $J = \langle P_{s-1} \rangle$.
However, since $\langle P_{s} \rangle \setminus \langle P_{s -1} \rangle$ is disconnected, this contradicts $K \setminus J$ is connected, because $\{I, J, K\} \in \Delta^{*}_{\mathcal{O}(P)}$.

\medskip
\noindent
{\bf Case 2.}
Suppose that there exists some $\{I, J, K\} \in \Delta^{*}_{\mathcal{O}(P)}$ such that $|{\rm max}(J)| = 1$, $k - j \neq 1$, and $\phi(\{I, J, K\})$ coincides $\{P_{s - t}, P_{s - 1}, P_{s}\}$.
Since $|{\rm max}(J)| = 1$ and $k - j \neq 1$, then $\phi(\{I, J, K\}) = \{P_{k-1},\; {\rm max}(J),\; {\rm max}(K)\}$.
We may assume that $P_s = {\rm max}(K)$, $P_{s-1} = P_{k-1}$, and $P_{s-t} = {\rm max}(J)$.
However, Since $|P_{s-t}| \geq 2$, this contradicts $|{\rm max}(J)| = 1$.

\medskip
\noindent
{\bf Case 3.}
Suppose that there exists some $\{I, J, K\} \in \Delta^{*}_{\mathcal{O}(P)}$ such that $|{\rm max}(J)| = 1$, $k - j = 1$, and $\phi(\{I, J, K\})$ coincides $\{P_{s - t}, P_{s - 1}, P_{s}\}$.
Since $|{\rm max}(J)| = 1$ and $k - j = 1$, then $\phi(\{I, J, K\}) = \{P_{k-1},\; {\rm min}(K \setminus J),\; {\rm max}(K)\}$.
We may assume that $P_s = {\rm max}(K)$, $P_{s-1} = P_{k-1}$, and $P_{s-t} = {\rm min}(K \setminus J)$.
Then $K = \langle P_{s} \rangle$ and $J = \langle P_{s-t-1} \rangle$.
However, Since $s - (s-t-1) = t+1 \geq 3$, this contradicts $k - j = 1$. 

\medskip
By the above arguments, the map $\phi$ turns out to be not surjective.
Consequently, it follows that $|\Delta^{*}_{\mathcal{O}(P)}| < |\Delta^{*}_{\mathcal{C}(P)}|$.
From Lemma \ref{triangle equality}, we establish $|\Delta_{\mathcal{O}(P)}| < |\Delta_{\mathcal{C}(P)}|$, as desired.
\end{proof}

In the following statement, we set the $X$-poset to be $\{a, b, c, d, e\}$, where $d, e \in P_{s}$, $ c \in P_{s - 1}$, and $a, b \in P_{s - t}$, with $2 \leq s \leq n$ and $2 \leq t \leq s$, following the same notation as in the proof of Theorem \ref{main}. 

\begin{Corollary}
Let $P$ be a maximal ranked poset of rank $n$ which contains an $X$-poset as a subposet.
The number of triangular $2$-faces of $\mathcal{C}(P)$ is equal to that of $\mathcal{O}(P)$ plus
\[\sum_{s=2}^n \; \sum_{t=2}^s \; \sum_{m=2}^{|P_s|} \;\sum_{\ell = 2}^{|P_{s-t}|} \binom{|P_s|}{m} \binom{|P_{s-t}|}{\ell}.\]
\end{Corollary}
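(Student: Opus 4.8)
The plan is to reduce the identity to counting the triangular $2$-faces of $\mathcal{C}(P)$ that lie \emph{outside} the image of the injection $\phi\colon\Delta^{*}_{\mathcal{O}(P)}\to\Delta^{*}_{\mathcal{C}(P)}$ constructed in the proof of Lemma \ref{triangle inequality}. By Lemma \ref{triangle equality}, $|\Delta_{\mathcal{C}(P)}|-|\Delta_{\mathcal{O}(P)}|=|\Delta^{*}_{\mathcal{C}(P)}|-|\Delta^{*}_{\mathcal{O}(P)}|$, and since $\phi$ is injective this equals $|\Delta^{*}_{\mathcal{C}(P)}\setminus\phi(\Delta^{*}_{\mathcal{O}(P)})|$. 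Write $M$ for the set of triples $\{D,\,P_{s-1},\,B\}$ with $2\le s\le n$, $2\le t\le s$, $D\subseteq P_{s-t}$ and $|D|\ge 2$, and $B\subseteq P_s$ and $|B|\ge 2$. I would prove that
\[
\Delta^{*}_{\mathcal{C}(P)}\setminus\phi(\Delta^{*}_{\mathcal{O}(P)})=M,
\]
and then count $|M|$.

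That $M\subseteq\Delta^{*}_{\mathcal{C}(P)}$ is immediate: as $P$ is maximal ranked the antichains $D,P_{s-1},B$ lie in the distinct ranks $s-t<s-1<s$, so $D\triangle P_{s-1}$, $P_{s-1}\triangle B$ and $B\triangle D$ are connected, while $\{P_{s-1},B\}\in E^{*}_{\mathcal{C}(P)}$ by Lemma \ref{OCedge}(b) with $\ell=s$; now Lemma \ref{2-face}(b) applies. To see $M\cap\phi(\Delta^{*}_{\mathcal{O}(P)})=\emptyset$, note that for any $\{\emptyset,J,K\}\in\Delta^{*}_{\mathcal{O}(P)}$ the antichain $\max(K)$ occupies the top of the three ranks appearing in $\phi(\{\emptyset,J,K\})$, so a preimage of $\{D,P_{s-1},B\}$ would force $\max(K)=B$, the unique antichain of the triple at the top rank $s$. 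The sub-cases of Cases 1 and 2 with $\operatorname{rk}(\max(J))=\operatorname{rk}(\max(K))$, and Case 3, are impossible here because they put two of the three image antichains in the same rank; and in the remaining part of Case 1 one is forced either into a rank mismatch or into $J=\langle P_{s-1}\rangle$, whence $K\setminus J=B$ is disconnected, while in Case 2 one is forced into $\max(J)=D$, contradicting $|\max(J)|=1$. This generalizes the argument in the proof of Theorem \ref{main}.

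The reverse inclusion is the main obstacle. Given $\{A,B,C\}\in\Delta^{*}_{\mathcal{C}(P)}$, each of $A,B,C$ is a nonempty antichain contained in a single rank level, since $P$ is maximal ranked. Relabelling so that $\{A,B\}\in E^{*}_{\mathcal{C}(P)}$, Lemma \ref{OCedge}(b) gives $A=P_{\ell-1}$ and $B\subseteq P_{\ell}$ with $|B|\ge 2$. I would split into cases by the rank $c$ of $C$: $c=\ell-1$ (so $C=P_{\ell-1}\setminus\{x\}$ by Lemma \ref{2-face}(b)), $c=\ell$ (so $C=B\triangle\{y\}$), $c\le\ell-2$, and $c\ge\ell+1$. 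In each case one checks that either $\{A,B,C\}$ already has the shape of a member of $M$ — precisely when $c\le\ell-2$ with $|C|\ge 2$, or when $c=\ell+1$, $B=P_{\ell}$, $|C|\ge 2$ and $|P_{\ell-1}|\ge 2$ — or else one can write down an explicit $\{\emptyset,J,K\}\in\Delta^{*}_{\mathcal{O}(P)}$ with $\phi(\{\emptyset,J,K\})=\{A,B,C\}$, where $J$ and $K$ are order ideals generated by suitable subsets of two of the three antichains and the appropriate one of the three cases of $\phi$ applies. The bulk of the work is the routine but lengthy verification, in each case, that $\{\emptyset,J,K\}$ really satisfies $J\subsetneq K$ with $J$, $K$, $K\setminus J$ connected and $|\max(J)|\ge 2$ or $|\max(K)|\ge 2$, and that $\phi$ lands in the intended case; one also notes that a triple may have several witnesses in $E^{*}_{\mathcal{C}(P)}$, but membership in $M$ and in $\phi(\Delta^{*}_{\mathcal{O}(P)})$ is independent of the choice.

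Finally I would count $M$. From $\{D,P_{s-1},B\}\in M$ the occupied ranks $s-t<s-1<s$ are distinct (since $t\ge 2$), so the triple determines $s$ and $t$ and then $B$ (the antichain at rank $s$) and $D$ (the antichain at rank $s-t$); conversely each choice of $s,t,B,D$ in the stated ranges yields a distinct member of $M$. Hence
\[
|M|=\sum_{s=2}^{n}\sum_{t=2}^{s}\Bigl(\sum_{m=2}^{|P_s|}\binom{|P_s|}{m}\Bigr)\Bigl(\sum_{\ell=2}^{|P_{s-t}|}\binom{|P_{s-t}|}{\ell}\Bigr)=\sum_{s=2}^{n}\sum_{t=2}^{s}\sum_{m=2}^{|P_s|}\sum_{\ell=2}^{|P_{s-t}|}\binom{|P_s|}{m}\binom{|P_{s-t}|}{\ell},
\]
which, combined with the reduction in the first paragraph, gives the claimed formula. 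The hard part is the exhaustive case analysis for the reverse inclusion, i.e.\ the surjectivity of $\phi$ onto $\Delta^{*}_{\mathcal{C}(P)}\setminus M$; everything else is bookkeeping with Lemmas \ref{2-face} and \ref{OCedge}.
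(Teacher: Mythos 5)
Your proposal is correct in outline and follows the same basic route as the paper's proof: reduce, via Lemma \ref{triangle equality} and the injectivity of $\phi$, to computing $|\Delta^{*}_{\mathcal{C}(P)}\setminus\phi(\Delta^{*}_{\mathcal{O}(P)})|$, identify that set with the family $M$ of triples $\{D,P_{s-1},B\}$, and count $M$ by the quadruple $(s,t,B,D)$, which the triple determines uniquely since its three ranks $s-t<s-1<s$ are distinct. Where you genuinely go beyond the paper is in recognizing that the claimed \emph{equality} needs the reverse inclusion $\Delta^{*}_{\mathcal{C}(P)}\setminus\phi(\Delta^{*}_{\mathcal{O}(P)})\subseteq M$: the paper's own proof only checks that each member of $M$ lies in $\Delta^{*}_{\mathcal{C}(P)}$ and avoids the image of $\phi$ (by the Theorem \ref{main} argument) and then counts, which on its face yields only a lower bound for $|\Delta_{\mathcal{C}(P)}|-|\Delta_{\mathcal{O}(P)}|$. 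Your case split by the rank $c$ of the third antichain $C$ relative to $\ell$ is the right one and does close: for $c=\ell-1$ one lands in Case 3 of $\phi$ with $J=\langle x\rangle$, $K=\langle B\rangle$; for $c=\ell$ one lands in Case 1 or Case 2 with $j=k=\ell$; for $c\le\ell-2$ with $|C|=1$ in Case 2 with $J=\langle C\rangle$, $K=\langle B\rangle$; for $c\ge\ell+2$, and for $c=\ell+1$ except when $B=P_{\ell}$, $|C|\ge2$ and $|P_{\ell-1}|\ge2$, in Case 1 (or Case 2 when $|P_{\ell-1}|=1$); the remaining triples are exactly those of $M$ after relabeling. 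Two small points you should still nail down when writing this up: the observation that $C\neq\emptyset$ (because $C\triangle B=B$ would be a disconnected antichain of size $\ge 2$), and the actual verification in each case that the exhibited $\{\emptyset,J,K\}$ satisfies the connectivity conditions of Lemma \ref{2-face}(a) and falls into the intended branch of $\phi$ --- you defer this as ``routine but lengthy,'' so as written the proposal is a correct and in fact more complete blueprint than the paper's proof, but not yet a finished argument.
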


\begin{proof}
Let $\{d, e\} \subset P'_{s} \subset P_{s}$ and $\{a, b\} \subset P'_{s-t} \subset P_{s-t}$.
Then $\{P'_{s-t}, P_{s-1}, P'_{s}\} \in \Delta^{*}_{\mathcal{C}(P)}$, and there does not exist any $\{I, J, K\} \in \Delta^{*}_{\mathcal{O}(P)}$ such that $\phi(\{I, J, K\}) = \{P'_{s - t}, P_{s - 1}, P'_{s}\}$, by the same argument as in the  proof of Theorem \ref{main}.
Fixing $s$, there are $\sum_{m =2}^{|P_s|} \binom{|P_s|}{m}$ ways to choose $P'_{s}$ and $\sum_{t=2}^{s} \sum_{\ell = 2}^{|P_{s-t}|} \binom{|P_{s-t}|}{\ell}$ ways to choose $P'_{s-t}$.
Therefore,
\[\sum_{s=2}^n\biggl\{\sum_{m =2}^{|P_s|} \binom{|P_s|}{m}\;\sum_{t=2}^{s} \sum_{\ell = 2}^{|P_{s-t}|} \binom{|P_{s-t}|}{\ell}\biggr\}
=
\sum_{s=2}^n \; \sum_{t=2}^s \; \sum_{m=2}^{|P_s|} \;\sum_{\ell = 2}^{|P_{s-t}|} \binom{|P_s|}{m} \binom{|P_{s-t}|}{\ell}.\]

\end{proof}

\section*{Acknowledgement}
The author would like to thank Takayuki Hibi for numerous helpful discussions.

\medskip

\end{document}